\documentclass[12pt,reqno]{amsart}
\usepackage{amscd,amssymb,epsfig}
\textheight=9in
\textwidth=6.5in
\headheight6.15pt 

\calclayout

\parskip=0pt plus 1pt 


 
\numberwithin{equation}{section} 

\newcommand{\ud}{\,d} 
 
\newcommand{\R}{\mathbb{R}}

\newcommand{\tir}[1]{\ensuremath{\overline {#1}}} 
\newtheorem{thm}{Theorem}[section] 
 
\newtheorem{lemma}[thm]{Lemma} 
 
\newtheorem{prop}[thm]{Proposition} 
 
\newtheorem{defn}[thm]{Definition}

\def\whsq{\vbox to 5.8pt 
{\offinterlineskip\hrule 
\hbox to 5.8pt{\vrule height 
5.1pt\hss\vrule height 5.1pt}\hrule}}

\def\<{\langle} 
\def\>{\rangle} 
\hyphenation{po-si-ti-ve}
\hyphenation{vis-co-si-ty}   
\hyphenation{con-ti-nu-ous}   
\hyphenation{ma-the-ma-ti-cal}    
\hyphenation{de-ve-lo-ping} 
\hyphenation{hig-hest} 
\hyphenation{as-so-cia-ted} 
\hyphenation{mo-de-led} 
\hyphenation{si-mu-la-tion} 
\hyphenation{dif-fe-ren-tial} 
\hyphenation{boun-ded-ness} 
\hyphenation{va-ni-shing} 
\hyphenation{sym-me-tric} 
\hyphenation{fa-mi-ly} 
\hyphenation{ap-pro-xi-ma-tions}
\hyphenation{ap-pro-xi-ma-ted}
\hyphenation{si-g-ni-fi-cant-ly}
\hyphenation{o-pe-ra-tor}
\hyphenation{qua-si-li-ne-ar}
\hyphenation{fle-xi-ble}
\hyphenation{de-ge-ne-ra-te}
\hyphenation{pro-blem}
\hyphenation{dif-fe-ren-ce}
\hyphenation{poin-ting}
\hyphenation{i-te-ra-ti-ve}
\def\PP{{\mathop{{\rm I}\kern-.2em{\rm P}}\nolimits}} 
\def\FF{{\mathop{{\rm I}\kern-.2em{\rm F}}\nolimits}}   
\def\ZZ{{\mathop{{\rm I}\kern-.2em{\rm Z}}\nolimits}} 
 
\minCDarrowwidth1.5pc

 
 
\setlength{\topmargin}{0in}
\setlength{\headheight}{0pt}
\setlength{\headsep}{15.5pt}
\setlength{\textheight}{227mm} 
\setlength{\textheight}{9in} 
 
\setlength{\parskip}{0.8ex} 
 
 
 
\newlength{\sidemargin} 
\setlength{\sidemargin}{0.25in} 
\setlength{\oddsidemargin}{\sidemargin} 
\setlength{\evensidemargin}{\sidemargin} 
\setlength{\textwidth}{6in} 
 
\setlength{\parindent}{0em} 
 
\begin{document}
\title[]{
Convergence of 
finite difference schemes to the Aleksandrov solution of the Monge-Amp\`ere equation
}

\thanks{Gerard Awanou was partially supported by NSF grant DMS-1319640. }
\author{Gerard Awanou}
\address{Department of Mathematics, Statistics, and Computer Science, M/C 249.
University of Illinois at Chicago, 
Chicago, IL 60607-7045, USA}
\email{awanou@uic.edu}  
\urladdr{http://www.math.uic.edu/\~{}awanou}

\author{Romeo Awi}
\address{School of Mathematics, Georgia Institute of Technology, 686 Cherry Street
Atlanta, GA 30332-0160, USA}
\email{rawi3@math.gatech.edu}
\urladdr{http://www.math.gatech.edu/~rawi3}

\maketitle

\begin{abstract}
We present a technique for proving convergence to the Aleksandrov solution of the Monge-Amp\`ere equation of a stable and consistent finite difference scheme. 
We also require a notion of discrete convexity with a stability property and a local equicontinuity property for bounded sequences.

\end{abstract}

\section{Introduction}

Given an orthogonal lattice with mesh length $h$ on a convex bounded domain $\Omega \subset \R^d$ with boundary $\partial \Omega$, we are interested in convergent finite difference approximations of the problem: find a convex function
$u \in C(\tir{\Omega})$ such that
\begin{align} \label{m1}
\begin{split}
\det D^2 u &=\nu \, \text{in} \, \Omega\\
 u &=g \, \text{on} \, \partial \Omega,
 \end{split}
\end{align}
where $\nu$ is a finite Borel measure 
and $g \in C(\partial \Omega)$ can be extended to a  convex function $\tilde{g} \in C(\tir{\Omega})$. When $u \in C^2(\Omega)$, $\det D^2 u$ is the determinant of the Hessian matrix $D^2 u=\bigg( \frac{\partial^2 u}{\partial x_i \partial x_j}\bigg)_{i,j=1,\ldots, d}$. In the general case, the expression $\det D^2 u$ denotes the Monge-Amp\`ere measure associated with $u$.

Let $\Omega_h$ denote the computational domain and $\partial \Omega_h$ its boundary. Let $f_h \geq 0$ be a family of mesh functions which converge to $\nu$ as measures. We consider the problem 
with unknown a mesh convex function $u_h$
\begin{align} \label{m1h}
\begin{split}
h^d \mathcal{M}_h[u_h] &= h^d f_h \, \text{in} \, \Omega_h\\
 u_h &=g \, \text{on} \, \partial \Omega_h.
 \end{split}
\end{align}
Here $\mathcal{M}_h[v_h]$ denotes a 
stable and consistent discretization of $\det D^2 v$ for a smooth convex function $v$. 
There are several notions of discrete convexity. We require that the uniform limit on compact subsets of mesh convex functions is a convex function and that a locally bounded sequence of such functions is locally equicontinuous.  
Of course we also require \eqref{m1h} to have a solution. A sufficient condition
is degenerate ellipticity and Lipschitz continuity as defined by Oberman \cite{Oberman06}.
We show that a family of solutions $u_h$ of \eqref{m1h} converges uniformly on compact subsets to the unique Aleksandrov solution of \eqref{m1}.

The Monge-Amp\`ere equation \eqref{m1} is a fully nonlinear equation which arises in several applications of great importance, e.g. optimal transportation and reflector design. Problems in affine geometry motivated the study of the Dirichlet 
problem. 

The equation $\det D^2 u = \nu$ with 
$\nu$ a sum of Dirac masses, and with Dirichlet boundary condition was solved by Pogorelov \cite{Pogorelov73}. For the so-called second boundary condition we refer to
\cite[Chapter V section 3]{Pogorelov64}.  When the measure $\nu$ is absolutely continuous with respect to the Lebesgue measure, with density $f \geq 0$ and $f \in C(\Omega)$, the convergence of a scheme of the type \eqref{m1h} was proved in \cite{Oberman2010a} using the notion of viscosity solution. In this paper, we use the notion of Aleksandrov solution, the consistency of the discretization \eqref{m1h} and approximation by smooth functions to handle the Monge-Amp\`ere measure.  In \cite{Awanou-Std-fd} the notion of Aleksandrov solution was also used along with a different procedure for approximation by smooth functions.

We note that the method introduced in  \cite{Benamou2014b} which is more effective when the measure $\nu$ is a combination of Dirac masses is not consistent. Our requirements for convergence which are stability, consistency and solvability of \eqref{m1}, as well as stability under uniform convergence on compact subsets of discrete convex mesh functions along with local equicontinuity of locally bounded sequences of such functions, 
are met by the finite difference scheme introduced in \cite{Oberman2010a}. However our
 numerical results indicate that a very good initial guess is required for an iterative method for solving the nonlinear problem \eqref{m1h} if one uses the discretization proposed in \cite{Oberman2010a}. In our numerical
experiments, the discrete problem \eqref{m1h} is solved with a time marching method which has also been used in \cite{AwanouHybrid}. The difficulty of capturing singular solutions may be related to the choice of the method for solving the nonlinear equation \eqref{m1h}. 
We plan to use in a subsequent work semi-deterministic algorithms of the type introduced in \cite{Mohammadi2007} for hyperbolic  Monge-Amp\`ere equations. This is motivated by the observation that discretizations of the type introduced in \cite{Oberman2010a} may have multiple solutions.

If $\nu$ has density $f \geq 0$ and $f \in C(\tir{\Omega})$, under our assumptions, $u_h$ converges uniformly on compact subsets to the unique viscosity solution of \eqref{m1} if the latter is known to have a unique viscosity solution. This follows from the 
equivalence of the notion of viscosity and Aleksandrov solutions, the proof of which we outline. If $f >0 $ and $f \in C(\tir{\Omega})$, a continuous viscosity solution of \eqref{m1} is also an Aleksandrov solution of \eqref{m1}  \cite[Proposition 1.7.1]{Guti'errez2001}. The result is also valid for $f \geq 0$ and $f \in C(\tir{\Omega})$. Indeed, consider the problems $\det D^2 u_{\epsilon} = f+ \epsilon, \epsilon >0$. By \cite[Lemma 5.1 ]{Hartenstine2006},  $u_{\epsilon}$ converges uniformly on compact subsets to $u$. One then uses the equivalence of the notion of viscosity and Aleksandrov solutions in the non degenerate case \cite[Propositions 1.7.1 and 1.3.4]{Guti'errez2001} and the stability of the notion of Aleksandrov and viscosity solutions under uniform convergence on compact subsets \cite[Lemma 1.2.3]{Guti'errez2001} and \cite[Theorem 2.3]{Barles94}. 

In this paper we provide the convergence proof of a time marching method for solving the nonlinear problem \eqref{m1h}.  The main contribution of this paper is the method of proof for convergence of finite difference schemes satisfying our assumptions. The numerical results indicate that such schemes may not lead to an effective numerical algorithm. Our results clarify the nature of efficient discretizations for \eqref{m1}. 
Another consequence of our results is the equivalence of the notions of viscosity and Aleksandrov solutions for $f \geq 0$ and $f \in C(\Omega) \cap L^1(\Omega)$. Indeed as we show in section \ref{numerical}, $u_h$ obtained through \eqref{m1h} and the discretization proposed in \cite{Oberman2010a} converges to the Aleksandrov solution. It is also known to converge to the unique viscosity solution of \eqref{m1} when the latter exists. Hence the result.

The paper is organized as follows: in the next section we give some notation, recall key results on the Aleksandrov solution and finite difference schemes. In section \ref{conv} we prove the claimed convergence result. 
We conclude with a proof of convergence of the time marching method and numerical experiments.

\section{Preliminaries}

In this section, we recall key results on the Aleksandrov solution of the Monge-Amp\`ere equation. We then associate discrete measures to mesh functions. For a smooth solution of \eqref{m1} we immediately get a discretization of the Monge-Amp\`ere measure. 
Finally, we introduce finite difference schemes.

\subsection{The Monge-Amp\`ere measure}

In this paper, we take the analytic approach to the Monge-Amp\`ere measure \cite{Rauch77}. Let $K(\Omega)$ denote the cone of convex functions on $\Omega$ and let $M(\Omega)$ denote the set of Borel measures on $\Omega$. For $v \in C^2(\Omega) \cap K(\Omega)$, and given a Borel set $B$, we define a Borel measure $\mathcal{M}[v]$ by
$$
\mathcal{M}[v](B) = \int_B \det D^2 v(x) \ud x.
$$

The topology on $M(\Omega)$ is induced by the weak convergence of measures

\begin{defn}
A sequence $\mu_n \in M(\Omega)$ converges to $\mu \in M(\Omega)$ if and only if $\mu_n(B) \to \mu(B)$ for any Borel set $B \subset \Omega$ with $\mu(\partial B)=0$.
\end{defn}
We note that there are several equivalent definitions of weak convergence of measures which can be found for example in \cite[Theorem 1, section 1.9]{Evans-Gariepy}. We have

\begin{prop} \cite[Proposition 3.1]{Rauch77} \label{weak-c}
The mapping $\mathcal{M}$ maps $C(\Omega)$-bounded subsets of $C^2(\Omega) \cap K(\Omega)$ into bounded subsets of $M(\Omega)$. Moroever $\mathcal{M}$ has a unique extension to a continuous operator on $K(\Omega)$.
\end{prop}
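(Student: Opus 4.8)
The plan is to pass from the analytic formula $\mathcal{M}[v](B) = \int_B \det D^2 v \ud x$ to a geometric description of the same quantity, because it is the geometric description that is monotone and stable under uniform limits. For $v \in C^2(\Omega)\cap K(\Omega)$ the gradient map $\nabla v$ is $C^1$ with Jacobian $\det D^2 v \geq 0$, and global monotonicity of $\nabla v$ makes it injective off the set $\{\det D^2 v = 0\}$, which contributes nothing to the area formula. Hence the change of variables (area) formula yields the identity $\mathcal{M}[v](B) = |\nabla v(B)|$ for every Borel set $B$, where $|\cdot|$ is Lebesgue measure and $\nabla v(B)=\{\nabla v(x):x\in B\}$. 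I would record this identity first, since it drives everything that follows, and note that $|\nabla v(B)|$ is exactly $|\partial v(B)|$, the Lebesgue measure of the image of the subdifferential (normal mapping) $\partial v$, which is defined for every convex function.

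For the boundedness assertion, let $F \subset C^2(\Omega)\cap K(\Omega)$ be bounded in $C(\Omega)$, i.e. $\sup_{v\in F}\sup_{K}|v|<\infty$ for each compact $K\subset\Omega$. I would invoke the interior Lipschitz estimate for convex functions: a convex function bounded by $M$ on a ball $B(x_0,2r)\subset\Omega$ is Lipschitz on $B(x_0,r)$ with constant controlled by $M/r$. Consequently $\nabla v(K)$ lies in a fixed ball whose radius depends only on the local sup bound and on $\operatorname{dist}(K,\partial\Omega)$, uniformly over $v\in F$. Combined with the identity of the first paragraph this gives $\sup_{v\in F}\mathcal{M}[v](K)=\sup_{v\in F}|\nabla v(K)|<\infty$ for each compact $K$, which is precisely boundedness of $\mathcal{M}(F)$ in $M(\Omega)$.

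For the extension and continuity the topology on $K(\Omega)$ is uniform convergence on compact subsets, and I would use two ingredients. First, $C^2(\Omega)\cap K(\Omega)$ is dense in $K(\Omega)$: the mollification $v_\epsilon = v\ast\rho_\epsilon$ of a convex function is smooth, convex, and converges to $v$ locally uniformly. Second, by the first paragraph the geometric measure $B\mapsto|\partial v(B)|$ agrees with $\mathcal{M}[v]$ on $C^2\cap K$ and is a Borel measure for every $v\in K(\Omega)$; it is therefore the natural candidate for the extension. Uniqueness is then automatic, since density of $C^2\cap K$ together with continuity determines the value of $\mathcal{M}$ at each $v\in K(\Omega)$.

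The heart of the matter, and the step I expect to be the main obstacle, is to show that $v_n\to v$ locally uniformly (all convex) forces $|\partial v_n(\cdot)|\to|\partial v(\cdot)|$ weakly. I would establish the two one-sided bounds that, with the condition $|\partial v(\partial B)|=0$, characterize weak convergence: upper semicontinuity on compacta, $\limsup_n|\partial v_n(K)|\leq|\partial v(K)|$ for $K$ compact, and lower semicontinuity on open sets, $\liminf_n|\partial v_n(U)|\geq|\partial v(U)|$ for $U$ open. The upper bound follows from the set inclusion $\limsup_n\partial v_n(K)\subseteq\partial v(K)$ (any limit of subgradients $p_n\in\partial v_n(x_n)$ with $x_n\in K$ is a subgradient of $v$ at a limit point, obtained by passing to the limit in the subgradient inequality using local uniform convergence) together with the reverse Fatou lemma applied to the indicators $\mathbf 1_{\partial v_n(K)}$, which are uniformly dominated thanks to the local Lipschitz bound. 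The lower bound is the genuinely delicate dual statement: one shows that, up to a Lebesgue-null set, $\partial v(U)$ is contained in $\liminf_n\partial v_n(U)$ — for a.e.\ exposed $p=\nabla v(x)$ the function $y\mapsto v_n(y)-p\cdot y$ attains an interior minimum on a small ball in $U$ for all large $n$ — and then applies Fatou. Given a Borel set $B$ with $|\partial v(\partial B)|=0$, sandwiching $\overline{B}$ and $\operatorname{int}B$ between these inequalities yields $|\partial v_n(B)|\to|\partial v(B)|$, completing the proof of continuity and hence of the extension.
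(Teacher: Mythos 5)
A preliminary remark: the paper does not prove this proposition at all --- it is imported verbatim from Rauch--Taylor \cite[Proposition 3.1]{Rauch77}, and the paper treats the coincidence of this analytic measure with the normal-mapping measure as a separate cited fact (\cite[Proposition 3.4]{Rauch77}). So the comparison is really with that source, and your route is genuinely different: Rauch and Taylor stay on the analytic side (this is why the paper calls it ``the analytic approach''), never invoking the normal mapping to prove the extension result, whereas you pass immediately to the geometric identity $\mathcal{M}[v](B)=|\nabla v(B)|=|\partial v(B)|$ and take the Aleksandrov measure $A\mapsto|\partial v(A)|$ as the explicit candidate extension. Your outline is correct. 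The area-formula step is sound (two points sharing a gradient force degeneracy of $D^2v$ along the segment joining them, and the critical set has null image), the $\limsup$ bound on compacta via closedness of the subdifferential graph plus reverse Fatou works because your uniform local Lipschitz bound supplies the required domination, and the $\liminf$ bound on open sets via the interior-minimum argument is the standard and correct dual step; this is essentially Lemmas 1.2.2--1.2.3 of Guti\'errez \cite{Guti'errez2001}. In effect you prove both Proposition 3.1 and (for smooth functions) Proposition 3.4 of \cite{Rauch77} at once; the price is reliance on two nontrivial facts you assert rather than prove, namely that $A\mapsto|\partial v(A)|$ is countably additive on Borel sets and that the slopes attained at more than one point form a Lebesgue-null set --- both are Aleksandrov's theorems and are exactly what makes your $\liminf$ step and final sandwich legitimate, so they should be cited explicitly.

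Two points need tightening. First, $v\ast\rho_\epsilon$ is defined and convex only on the shrunken domain $\Omega_\epsilon$, so it is not an element of $K(\Omega)$ and density of $C^2(\Omega)\cap K(\Omega)$ in $K(\Omega)$ does not follow directly from mollification; since $\Omega$ is convex one can first precompose $v$ with a dilation toward an interior point (which is defined on a neighborhood of $\overline{\Omega}$) and then mollify, repairing the uniqueness-of-extension step. Second, in the sandwich $\operatorname{int}B\subset B\subset\overline{B}$ you should restrict to Borel sets $B$ with compact closure in $\Omega$ --- the setting in which the proposition is actually used --- since your semicontinuity inequalities are established for compact sets and precompact open sets respectively, and $\overline{B}$ may otherwise meet $\partial\Omega$.
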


For a convex function $v$, we will refer to $\mathcal{M}[v]$ as the Monge-Amp\`ere measure associated with $v$. It can be shown that it coincides with the notion of Monge-Amp\`ere measure obtained through the normal mapping \cite[Proposition 3.4]{Rauch77}.

\begin{defn}
Given a Borel measure $\nu$ on $\Omega$, a convex function $u \in C(\Omega)$ is an Aleksandrov solution of
$$
\det D^2 u = \nu,
$$
if and only if $\mathcal{M}[u] = \nu$.
\end{defn}

We recall an existence and uniqueness result for the solution of \eqref{m1}.

\begin{prop}  [\cite{Hartenstine2006} Theorem 1.1] \label{ex-Alex}
Let $\Omega$ be a bounded convex domain of $\R^d$. Assume $\nu$ is a finite Borel measure and $g \in C(\partial \Omega)$ can be extended to a function $\tilde{g} \in C(\tir{\Omega})$ which is  convex in $\Omega$. Then the Monge-Amp\`ere equation \eqref{m1}
has a unique Aleksandrov solution in $K(\Omega) \cap C(\tir{\Omega})$.

\end{prop}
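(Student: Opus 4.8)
The plan is to treat uniqueness and existence separately. Uniqueness will follow at once from the comparison principle for the Monge-Amp\`ere measure \cite[Theorem 1.4.6]{Guti'errez2001}: if $u,v \in K(\Omega)\cap C(\tir{\Omega})$ satisfy $\mathcal{M}[u]\le \mathcal{M}[v]$ in $\Omega$ and $u\ge v$ on $\partial\Omega$, then $u\ge v$ throughout $\Omega$. If $u$ and $v$ are two Aleksandrov solutions of \eqref{m1}, then $\mathcal{M}[u]=\nu=\mathcal{M}[v]$ and $u=g=v$ on $\partial\Omega$, so applying this principle once in each direction gives $u\ge v$ and $v\ge u$, whence $u=v$. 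The one point to check is that the comparison principle, usually phrased for the measure defined via the normal mapping, applies to the operator $\mathcal{M}$ of Proposition \ref{weak-c}; this is precisely the identification of the two measures recorded in \cite[Proposition 3.4]{Rauch77}.

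For existence I would argue by approximation, exploiting that \eqref{m1} is solvable when $\nu$ is a finite sum of Dirac masses \cite{Pogorelov73}. First, weakly approximate the finite Borel measure $\nu$ by a sequence $\nu_n$ of finite sums of Dirac masses with $\nu_n(\Omega)=\nu(\Omega)$, obtained for instance by partitioning $\Omega$ into cells of shrinking diameter and placing the mass $\nu(\text{cell})$ at a point of each cell. For each $n$ let $u_n\in K(\Omega)\cap C(\tir{\Omega})$ be the (piecewise linear) Aleksandrov solution with $\mathcal{M}[u_n]=\nu_n$ and $u_n=g$ on $\partial\Omega$. Convexity of $u_n$ together with $u_n=g$ on $\partial\Omega$ forces $u_n\le \max_{\partial\Omega} g$, while the Aleksandrov maximum principle \cite[Theorem 1.4.2]{Guti'errez2001} bounds $u_n$ from below in terms of $\min_{\partial\Omega} g$, $\operatorname{diam}\Omega$, $\operatorname{dist}(\cdot,\partial\Omega)$ and the uniformly bounded total mass $\nu_n(\Omega)$. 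Thus $\{u_n\}$ is locally uniformly bounded, and since locally bounded families of convex functions are locally equicontinuous, Arzel\`a--Ascoli yields a subsequence converging uniformly on compact subsets to a convex $u$.

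Proposition \ref{weak-c} then applies: the uniform convergence on compact subsets of the $u_n$ gives $\mathcal{M}[u_n]\to\mathcal{M}[u]$ weakly in $M(\Omega)$. Since $\mathcal{M}[u_n]=\nu_n\to\nu$ and weak limits are unique, $\mathcal{M}[u]=\nu$, so $u$ solves the equation in the Aleksandrov sense.

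The hard part will be the boundary condition, namely that $u$ extends continuously to $\tir{\Omega}$ with $u=g$ on $\partial\Omega$. This is subtle precisely because $\Omega$ is assumed only convex and not strictly convex, so along a flat face of $\partial\Omega$ the standard strictly convex barrier is unavailable. Following \cite{Hartenstine2006}, I would use the convex extension $\tilde{g}$ as a global upper barrier and construct lower barriers from supporting hyperplanes of $\Omega$ adapted to the local boundary geometry, and then promote the interior estimate above to one that is uniform up to $\partial\Omega$ so that the prescribed boundary values are attained in the limit. Making this barrier construction work at non-strictly-convex boundary points, and thereby identifying the trace of $u$, is the technical core of the argument.
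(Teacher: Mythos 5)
You should first note a structural point: the paper does not prove this proposition at all. It is quoted, with citation, from Hartenstine [Theorem 1.1], so there is no internal proof to compare against; your proposal has to be judged against the argument in the cited literature. Measured that way, your skeleton is exactly the standard one (Guti\'errez's Theorem 1.6.2, as adapted by Hartenstine to merely convex domains): uniqueness by the comparison principle, existence by weak approximation of $\nu$ by finite sums of Dirac masses, uniform estimates, Arzel\`a--Ascoli, and stability of $\mathcal{M}$ under locally uniform convergence (Proposition \ref{weak-c}). One small repair: the Aleksandrov maximum principle you invoke applies to convex functions vanishing on the boundary, so to get the lower bound for $u_n$ with data $g$ you should apply it on the convex sublevel set $\{x \in \Omega : u_n(x) < \min_{\partial\Omega} g\}$, on which $u_n - \min_{\partial\Omega} g$ is convex, negative, and vanishes on the relative boundary; as stated, your estimate does not literally apply.

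The genuine gaps are the two places where the convex-but-not-strictly-convex geometry actually enters, and they are the content of the theorem. First, the boundary behavior: your compactness argument only produces a convex $u$ with $\mathcal{M}[u]=\nu$ in $\Omega$; the assertion that $u \in C(\tir{\Omega})$ with $u = g$ on $\partial\Omega$ is deferred to ``following [Hartenstine2006]'', which is circular, since Hartenstine's Theorem 1.1 is precisely the statement being proved. What is needed, and missing, is a boundary modulus for the family $u_n$ that is uniform in $n$ (built from $\tilde g$ as upper barrier and affine supports of $\tilde g$ as lower barriers, valid also on flat portions of $\partial\Omega$); without it, the locally uniform limit can a priori lose the boundary condition, and then neither the equation nor uniqueness identifies it as the claimed solution. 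Second, the same difficulty is hidden in your first step: solvability of the Dirichlet problem for a finite sum of Dirac masses with data $g$ on a merely convex domain is not covered by the strictly convex theory, and citing Pogorelov (as the paper's introduction also does) outsources exactly the boundary-attainment issue; in Hartenstine's actual argument this case is established by a Perron method with cone-generated subsolutions and barriers built from $\tilde g$. So the proposal reproduces the correct compactness skeleton but leaves unproved the domain-specific part that distinguishes this result from the classical strictly convex one.
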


Throughout this paper, we will follow the convention of denoting by $p$  a measure $\nu$ which is absolutely continuous with respect to the Lebesgue measure and with density $p$.

\subsection{Discrete measures associated with mesh functions}

Let $h$ be a small positive parameter and let
$$
\mathbb{Z}^d_h = \{\, m h, m \in \mathbb{Z}^d \, \},
$$
denote the regular uniform grid of $\R^d$. By a mesh function we mean a real-valued function defined on $\mathbb{Z}^d_h$. We denote by $\mathcal{C}_h$ the cone of discrete convex mesh functions. We consider in 
section \ref{numerical} a notion of discrete convex mesh functions which fulfill the assumptions of this paper.

The computational domain is defined as $\Omega_h = \Omega \cap \mathbb{Z}^d_h$ and its boundary is simply $\partial \Omega_h = \{ \, x \in \tir{\Omega} \cap  \mathbb{Z}_h^d,  x \notin \Omega_h \, \}$.


Let $v_h$ be a mesh function such that $v_h \geq 0$ on $\Omega_h$. We associate to $v_h$ a Borel measure which we denote here by $v_h$ and defined by
\begin{equation} \label{defn-dbm}
v_h(B) = h^d \sum_{x \in \Omega_h \cap B} v_h(x),
\end{equation}
for any Borel set $B$. 

Given a continuous function $v$ on  $\Omega$, we use the notation $r_h(v)$ to denote its restriction to $\Omega_h$. If $v \geq 0$ is a continuous function on $\Omega$, we have 
$$
\lim_{h \to 0} r_h(v)(B) = \int_B v(x) \ud x.
$$
for any $B\subset \Omega$ satisfiying 
$|\partial B|=0$.
In other words, the measures $v_h$ converge weakly to $v$. 

\begin{defn} \label{disc-uc}
Let $v_h \in \mathcal{C}_h$ for each $h >0$. We say that $v_h$ converges to a convex function $v \in C(\Omega)$ uniformly on compact subsets of $\Omega$ if and only if for each compact set $K \subset \Omega$, each sequence $h_k \to 0$ and for all $\epsilon >0$, there exists $h_{-1} >0$ such that for all $h_k$, $0< h_k < h_{-1}$, we have
$$
\max_{x \in K \cap \mathbb{Z}_h^d} |v_{h_k}(x) - v(x)| < \epsilon.
$$
\end{defn}

\subsection{Finite difference schemes}
Let $f_h \geq 0$ be a family of mesh functions which converge to $\nu$ as measures and let $\mathcal{M}_h[r_h v ]$ denotes a  discretization of $\det D^2 v$ for a smooth convex function $v$. We are interested in the discrete Monge-Amp\`ere equation \eqref{m1h}. 

The discretization $\mathcal{M}_h[r_h v]$ is said to be consistent if for all $C^2$ convex functions $v$, and a sequence $x_h \in \Omega_h$ such that $x_h \to x \in \Omega$,
$\lim_{h \to 0} \mathcal{M}_h[r_h v] (x_h) = \det D^2 v(x)$.  

The discretization is said to be stable if the problem $ \mathcal{M}_h[v_h]  = f_h$ has a solution $v_h$ which is bounded independently of $h$.



\section{Convergence} \label{conv}
We recall that our assumptions are that the nonlinear equation \eqref{m1h} is solvable, the discretization is stable and consistent and finally the uniform limit on compact subsets of mesh convex functions is a convex function and that a locally bounded sequence of such functions is locally equicontinuous. 

\begin{lemma} \label{conj2}
There exists a constant $C_0$ such that for all Borel sets $B \subset \Omega$
$$
| \mathcal{M}_h[w_{h}](B) - \mathcal{M}_h[v_h](B) | \leq C_0 \max_{x \in \tir{B} \cap \mathbb{Z}^d_h}| w_h(x)-v_h(x)|. 
$$
\end{lemma}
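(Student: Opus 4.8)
The plan is to turn the measure-level estimate into a pointwise one and then sum. By the definition \eqref{defn-dbm} of the Borel measure associated with a nonnegative mesh function, both $\mathcal{M}_h[w_h]$ and $\mathcal{M}_h[v_h]$ induce measures of the form $h^d\sum_{x\in\Omega_h\cap B}(\cdot)(x)$, so
\[
\mathcal{M}_h[w_h](B) - \mathcal{M}_h[v_h](B) = h^d\!\!\sum_{x\in\Omega_h\cap B}\big(\mathcal{M}_h[w_h](x) - \mathcal{M}_h[v_h](x)\big),
\]
and by the triangle inequality it suffices to control each summand $|\mathcal{M}_h[w_h](x) - \mathcal{M}_h[v_h](x)|$ and then to bound the resulting sum.

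For the pointwise estimate I would invoke the Lipschitz continuity of the scheme in the sense of Oberman \cite{Oberman06}: since $\mathcal{M}_h[u_h](x)$ is a Lipschitz function of the finitely many stencil values $u_h(y)$, $y\in S(x)$, there is a constant $L$ (the Lipschitz constant of the scheme on the given mesh) with
\[
|\mathcal{M}_h[w_h](x) - \mathcal{M}_h[v_h](x)| \le L\,\max_{y\in S(x)}|w_h(y) - v_h(y)|.
\]
For $x\in\Omega_h\cap B$ the stencil $S(x)$ consists of $x$ and its lattice neighbors, which lie in $\tir{B}\cap\mathbb{Z}^d_h$ (passing if necessary to grid-aligned sets $B$, or enlarging $B$ by one layer and then taking the closure). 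Hence the right-hand side is at most $L\,\max_{y\in\tir{B}\cap\mathbb{Z}^d_h}|w_h(y)-v_h(y)|$, a quantity independent of $x$.

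Pulling this $x$-independent maximum out of the sum, the remaining factor is $h^d\,\#(\Omega_h\cap B)$. Since $\#(\Omega_h\cap B)\le\#\Omega_h$ and $h^d\#\Omega_h$ is a Riemann sum converging to $|\Omega|$, this factor is bounded, uniformly in $B$, by a fixed constant $C$ depending only on $\Omega$. Taking $C_0 = C L$ then yields the stated inequality.

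The main obstacle is the pointwise estimate, namely extracting a usable Lipschitz constant from the degenerate ellipticity and Lipschitz continuity of the scheme; this is where the determinant structure of $\mathcal{M}_h$ enters, and one must keep careful track of the dependence of $L$ on $h$ and on the discrete second differences of $w_h$ and $v_h$ (the continuous analogue, where the difference of Monge-Amp\`ere masses over $B$ is controlled by boundary gradients rather than by $\max|w-v|$, shows that such $h$-dependence is genuinely present, so $C_0$ is understood for a fixed mesh). A secondary, purely bookkeeping point is the treatment of stencil neighbors of $x\in B$ that fall just outside $B$; this is absorbed by working with $\tir{B}$, or by reducing to grid-aligned Borel sets, which is all that is needed for the weak-convergence applications of the lemma.
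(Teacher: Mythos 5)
Your route is genuinely different from the paper's, and the difference is exactly where the trouble lies. The paper's proof uses no property of $\mathcal{M}_h$ at all: it invokes \eqref{defn-dbm} to write the difference of the two discrete measures as $h^d$ times a sum over $B \cap \Omega_h$ of pointwise differences of the very mesh values defining those measures, pulls the maximum out of the sum, and bounds the leftover factor $\sum_{x \in \tir{B}\cap\Omega_h} h^d$ by a constant independent of $h$ (essentially the volume of $\Omega$; this is what \eqref{c0} records). Read strictly, the first line of that proof writes $w_h(x)$ where $\mathcal{M}_h[w_h](x)$ is meant, so what the counting argument actually establishes, uniformly in $h$, is the estimate with $\max_{x\in\tir{B}\cap\mathbb{Z}^d_h}|\mathcal{M}_h[w_h](x)-\mathcal{M}_h[v_h](x)|$ on the right-hand side. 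You refuse that identification---correctly, since $\mathcal{M}_h$ is nonlinear---and instead interpose a pointwise Lipschitz bound $|\mathcal{M}_h[w_h](x)-\mathcal{M}_h[v_h](x)|\le L\,\max_{y}|w_h(y)-v_h(y)|$ (maximum over the stencil of $x$) before summing. That is the only honest way to reach the inequality as literally stated, but it is not the paper's argument.

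The gap is the $h$-dependence of $L$, which you flag and then set aside by declaring $C_0$ ``understood for a fixed mesh.'' That reading cannot serve the lemma's purpose: in the proof of the weak-convergence theorem the thresholds $\delta/(6C_0)$, hence $\epsilon_0$ and $h_0$, are fixed once and the same $C_0$ must work for every $h<h_0$, and \eqref{c0} presents $C_0$ as a volume-type bound, not a mesh-dependent quantity. Moreover, no repair is possible, because with an $h$-uniform constant the inequality as stated is false. In $d=2$, for the Froese--Oberman scheme, take $v_h\equiv 0$, $w_h=r_h(w)$ with $w(x)=|x_1-x^0_1|+|x_2-x^0_2|$ and $x^0\in\Omega_h$ (both mesh functions are discrete convex), and $B=\{x^0\}$. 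Then $\tir{B}\cap\mathbb{Z}^d_h=\{x^0\}$ and $w_h(x^0)=v_h(x^0)=0$, so the right-hand side vanishes for any $C_0$, while the left-hand side is $h^2\mathcal{M}_h[w_h](x^0)$, which for a fixed stencil is a positive constant independent of $h$ (equal to $4$ for the axis-aligned stencil). The same example shows $L\gtrsim h^{-2}$, so your $C_0=CL$ genuinely degenerates as $h\to 0$. In short: your argument proves a fixed-$h$ statement that is too weak for the role the lemma plays, the $h$-uniform statement you would need is false, and the paper's own proof sidesteps the issue only by (implicitly) placing the differences of the $\mathcal{M}_h$-values, rather than of $w_h$ and $v_h$, on the right-hand side.
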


\begin{proof}
We have by \eqref{defn-dbm}

\begin{align*}
| \mathcal{M}_h[w_{h}](B) - \mathcal{M}_h[v_h](B) | & = h^d \bigg| \sum_{x \in B \cap \Omega_h} w_h(x) -  \sum_{x \in B \cap \Omega_h}  v_h(x) \bigg| \\
& \leq \sum_{x \in \tir{B} \cap \Omega_h} h^d | w_h(x)-v_h(x)| \\
& \leq \max_{x \in \tir{B} \cap \mathbb{Z}^d_h}| w_h(x)-v_h(x)|  \sum_{x \in \tir{B} \cap \Omega_h} h^d \\
& \leq C  \max_{x \in \tir{B} \cap \mathbb{Z}^d_h}| w_h(x)-v_h(x)|. 
\end{align*}
\end{proof}

The constant $C_0$ in Lemma \ref{conj2} satisfies
\begin{equation} \label{c0}
 \sum_{x \in \Omega_h} h^d \leq C_0.
\end{equation}

\begin{lemma} \label{conj1}
For $v \in K(\Omega) \cap C^2(\Omega)$,  $h^d \mathcal{M}_h[r_h v]$ converge weakly to $\det D^2 v$.
\end{lemma}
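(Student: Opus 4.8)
The plan is to read $h^d \mathcal{M}_h[r_h v]$ as the Borel measure that definition \eqref{defn-dbm} associates to the mesh function $\mathcal{M}_h[r_h v]$ (nonnegative for $h$ small, since $v$ is convex), and to show directly that its value on a test set converges to the corresponding value of the absolutely continuous measure $\det D^2 v$, which since $v \in C^2(\Omega)$ has the continuous nonnegative density $x \mapsto \det D^2 v(x)$. Fixing a Borel set $B \subset \Omega$ with $|\partial B| = 0$, so that $\det D^2 v(\partial B)=0$, it suffices by the definition of weak convergence to prove that
$$h^d \sum_{x \in \Omega_h \cap B} \mathcal{M}_h[r_h v](x) \longrightarrow \int_B \det D^2 v(x) \ud x.$$

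First I would insert the restriction $r_h(\det D^2 v)$ and split the error into a consistency part and a quadrature part:
$$h^d \sum_{x \in \Omega_h \cap B} \big( \mathcal{M}_h[r_h v](x) - \det D^2 v(x)\big) + \Big( h^d \sum_{x \in \Omega_h \cap B} \det D^2 v(x) - \int_B \det D^2 v \ud x \Big).$$
The second bracket tends to $0$ because $\det D^2 v$ is continuous, nonnegative, and $|\partial B|=0$: this is precisely the Riemann-sum convergence $r_h(w)(B) \to \int_B w$ recorded in the preliminaries, applied to $w = \det D^2 v$. For the first bracket I would invoke the elementary estimate behind Lemma \ref{conj2} together with \eqref{c0}, bounding the sum by $C_0$ times the largest integrand,
$$\Big| h^d \sum_{x \in \Omega_h \cap B} \big( \mathcal{M}_h[r_h v](x) - \det D^2 v(x)\big)\Big| \le C_0 \max_{x \in \tir B \cap \mathbb{Z}^d_h} \big| \mathcal{M}_h[r_h v](x) - \det D^2 v(x)\big|.$$

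It then remains to show the maximum on the right tends to $0$, and this is where consistency enters. Since consistency is only a pointwise statement along sequences $x_h \to x$, the key step is to upgrade it to convergence uniform over the grid points involved. On a compact $K \subset \Omega$ this follows by a standard contradiction argument: a sequence of grid points on which the error stayed above some $\epsilon>0$ would, by compactness, have a subsequence converging to a point $x \in K \subset \Omega$, and consistency together with the continuity of $\det D^2 v$ would force the error along it to vanish, a contradiction.

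The main obstacle is exactly this uniformity near $\partial \Omega$: if $\tir B$ meets the boundary it is not a compact subset of $\Omega$, and consistency — whose stencils must remain inside $\Omega$ — need not hold up to $\partial \Omega$. When $v$ extends to a $C^2$ function on $\tir\Omega$, or when one only tests sets $B$ with $\tir B \subset \Omega$, this difficulty disappears and the two displays above finish the proof. In the general case I would exhaust $\Omega$ by the compact sets $K_\delta = \{x : \operatorname{dist}(x,\partial\Omega) \ge \delta\}$, apply the uniform estimate on $B \cap K_\delta$, and control the thin collar $B \setminus K_\delta$ through the smallness of $\int_{B\setminus K_\delta} \det D^2 v$ together with a uniform bound on the discrete mass $h^d \sum_{x \in (B \setminus K_\delta)\cap\Omega_h} \mathcal{M}_h[r_h v](x)$; sending $h \to 0$ first and then $\delta \to 0$ would close the argument, with the discrete-mass bound near the boundary being the delicate point requiring the most care.
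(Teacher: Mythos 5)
Your decomposition is exactly the paper's: the same splitting of $h^d\mathcal{M}_h[r_h v](B)$ into a Riemann-sum (quadrature) term and a consistency term, with the latter bounded via \eqref{c0} by $C_0$ times the maximum of $|\mathcal{M}_h[r_h v](x)-\det D^2 v(x)|$ over the grid points of $\tir{B}$. The difference lies in the final step. The paper simply declares that this maximum vanishes ``by consistency,'' whereas you correctly observe that consistency, as defined in the paper, is only a pointwise statement along sequences $x_h \to x \in \Omega$, so that controlling a maximum requires the compactness/contradiction argument you describe --- and that this argument is available only on compact subsets of $\Omega$, leaving a real issue when $\tir{B}$ meets $\partial\Omega$, where maximizing grid points may accumulate at boundary points at which consistency gives no information and where, for $v$ merely in $C^2(\Omega)$, $\det D^2 v$ need not even be bounded. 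Your $K_\delta$-exhaustion with control of the collar term is the natural repair, and the one delicate point you flag (bounding the discrete mass of $\mathcal{M}_h[r_h v]$ in the collar) is precisely what the paper's proof never confronts. A minor further difference: you test weak convergence on sets with $|\partial B|=0$, while the paper's definition requires all Borel sets with $\mathcal{M}[v](\partial B)=0$; this is harmless, since sets with Lebesgue-null boundary form a convergence-determining class (and, in fact, the Riemann-sum statement in the preliminaries is itself only formulated for $|\partial B|=0$, so the paper is implicitly making the same reduction). In short: same approach and correct, but your version makes explicit --- and largely repairs --- a uniformity gap that the paper's own proof glosses over.
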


\begin{proof}
By consistency, $\mathcal{M}_h[r_h v](x) \to \det D^2 v(x)$ for $v \in K(\Omega) \cap C^2(\Omega)$. By definition of the Monge-Amp\`ere measure and of discretization of the integral, and using consistency, we have
for a Borel set $B$ with $\mathcal{M}[v](\partial B)=0$
\begin{align*}
\int_B \det D^2 v(x) \ud x & = \lim_{h \to 0} h^d \sum_{x \in B \cap \Omega_h} \det D^2 v(x).
\end{align*}
Moreover using \eqref{c0}
\begin{align*}
\bigg|  h^d \sum_{x \in B \cap \Omega_h} \det D^2 v(x) - h^d \mathcal{M}_h[r_h v](B) \bigg| & = \bigg|   \sum_{x \in B \cap \Omega_h} h^d\left(\det D^2 v(x)  -  \mathcal{M}_h[r_h v](x)\right) \bigg|  \\
& \leq C_0 \max_{x \in B \cap \Omega_h} |  \det D^2 v(x)  -  \mathcal{M}_h[r_h v](x)|,
\end{align*}
from which the result follows by consistency.
\end{proof}

We have the following weak convergence result for discrete Monge-Amp\`ere measures

\begin{thm}
Let $v_h \in \mathcal{C}_h$ converge uniformly on compact subsets to $v \in K(\Omega) \cap C(\tir{\Omega})$. Then $h^d \mathcal{M}_h[v_h]$ converges weakly to $\mathcal{M}[v]$.
\end{thm}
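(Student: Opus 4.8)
The plan is to deduce the theorem from the three preceding results by an approximation argument: Lemma \ref{conj1} already gives the conclusion for smooth convex functions, Proposition \ref{weak-c} transfers the exact Monge--Amp\`ere measure from a smooth approximant to the merely continuous limit, and Lemma \ref{conj2} supplies the stability estimate that makes the two limiting processes ($h\to 0$ and a smoothing parameter $\delta\to 0$) interchangeable. Each of the three ingredients will govern exactly one term of a triangle decomposition.

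First I would regularize $v$. Since $v\in K(\Omega)\cap C(\tir\Omega)$, the mollifications $v^\delta:=v*\rho_\delta$ belong to $C^\infty\cap K$ on the subdomain $\Omega_\delta=\{x:\operatorname{dist}(x,\partial\Omega)>\delta\}$, and $v^\delta\to v$ uniformly on compact subsets of $\Omega$ as $\delta\to 0$ (convexity is preserved under convolution with a nonnegative kernel). It suffices to verify the weak convergence on a convergence-determining class, and I would take $B$ to be an open box with $\tir B\subset\Omega$ and $\mathcal{M}[v](\partial B)=0$. The advantage of boxes is that $|\partial B|=0$ automatically; hence for the smooth $v^\delta$ one has $\mathcal{M}[v^\delta](\partial B)=\int_{\partial B}\det D^2 v^\delta\ud x=0$ for every $\delta$, which frees the use of Lemma \ref{conj1} from any compatibility condition on $\partial B$.

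For a fixed such $B$ and $\delta$ small enough that $\tir B\subset\Omega_\delta$, I would write
\begin{align*}
(h^d\mathcal{M}_h[v_h])(B)-\mathcal{M}[v](B) &= \big[(h^d\mathcal{M}_h[v_h])(B)-(h^d\mathcal{M}_h[r_h v^\delta])(B)\big]\\
&\quad + \big[(h^d\mathcal{M}_h[r_h v^\delta])(B)-\mathcal{M}[v^\delta](B)\big] + \big[\mathcal{M}[v^\delta](B)-\mathcal{M}[v](B)\big],
\end{align*}
and call the three bracketed quantities $I$, $II$, $III$. Term $I$ is controlled \emph{uniformly in $h$} by Lemma \ref{conj2}: $|I|\le C_0\max_{x\in\tir B\cap\mathbb{Z}^d_h}|v_h(x)-v^\delta(x)|\le C_0\big(\max_{\tir B\cap\mathbb{Z}^d_h}|v_h-v|+\max_{\tir B}|v-v^\delta|\big)$, where the first summand tends to $0$ as $h\to0$ by Definition \ref{disc-uc} (independently of $\delta$) and the second tends to $0$ as $\delta\to0$ (independently of $h$). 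Term $III$ tends to $0$ as $\delta\to0$: since $v^\delta\to v$ uniformly on compact subsets, the continuity in Proposition \ref{weak-c} gives $\mathcal{M}[v^\delta]\rightharpoonup\mathcal{M}[v]$, and $B$ is a continuity set for $\mathcal{M}[v]$. Term $II$ tends to $0$ as $h\to0$ for each fixed $\delta$ by Lemma \ref{conj1}. An $\varepsilon/4$ argument then closes the proof: first fix $\delta$ so that the $\delta$-dependent part of $I$ and the whole of $III$ are each below $\varepsilon/4$, then send $h\to0$ to push the $h$-dependent part of $I$ and the term $II$ below $\varepsilon/4$.

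The main obstacle is precisely this interchange of the two limits. Lemma \ref{conj1} is available only for smooth convex functions, whereas the hypothesis concerns a merely continuous convex limit $v$; bridging the gap requires that the error incurred by replacing $v_h$ with the smooth proxy $r_h v^\delta$ be bounded uniformly in $h$, which is exactly what the stability estimate of Lemma \ref{conj2} provides. The decoupling of its right-hand side into an $h$-controlled term ($v_h-v$) and a $\delta$-controlled term ($v-v^\delta$) is what lets the $\varepsilon/4$ scheme close across the double limit. A secondary technical point — reconciling the null-boundary condition $\mathcal{M}[v](\partial B)=0$ demanded at the limit with the condition $|\partial B|=0$ needed to invoke Lemma \ref{conj1} for each $v^\delta$ — is what dictates the restriction to box-shaped continuity sets, for which the latter condition holds automatically.
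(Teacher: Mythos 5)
Your proof follows essentially the same architecture as the paper's: the same three-term triangle decomposition, with Lemma \ref{conj2} controlling the discrete-to-smooth-proxy term, Lemma \ref{conj1} the discrete-to-continuum term for the smooth approximant, and Proposition \ref{weak-c} the approximant-to-limit term, closed by an $\varepsilon$-splitting. Your restriction to box-shaped continuity sets is in fact a refinement over the paper: the paper applies Lemma \ref{conj1} to $v_{\epsilon_0}$ on a set $B$ for which only $\mathcal{M}[v](\partial B)=0$ is known, whereas that lemma requires $\partial B$ to be null for $\mathcal{M}[v_{\epsilon_0}]$; your observation that $|\partial B|=0$ for boxes, hence $\mathcal{M}[v^\delta](\partial B)=0$ automatically, removes this compatibility issue (and likewise guarantees $\tir{B}$ is a compact subset of $\Omega$, so the hypothesis of uniform convergence on compact subsets genuinely applies on $\tir{B}$). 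You do then owe the standard convergence-determining-class argument to pass from boxes back to arbitrary Borel continuity sets, which you invoke but do not carry out; that step is routine for relatively compact sets but is where all the measure-theoretic weight of your variant sits.

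There is, however, one genuine gap: your smooth approximants $v^\delta=v*\rho_\delta$ are defined only on $\Omega_\delta=\{x:\operatorname{dist}(x,\partial\Omega)>\delta\}$, not on $\Omega$. The paper's framework requires globally defined objects in two places where you use $v^\delta$: first, $\mathcal{M}_h$ acts on mesh functions defined on all of $\Omega_h$, and nothing in the abstract assumptions makes the scheme local --- indeed the wide-stencil scheme of Section \ref{numerical} evaluates $\mathcal{M}_h[w_h](x)$ using differences $w_h(x\pm\alpha_i)$ with $\alpha_i$ ranging over all admissible directions reaching across $\Omega_h$, so $\mathcal{M}_h[r_h v^\delta](x)$ is simply not defined when $r_h v^\delta$ lives only on $\Omega_\delta\cap\mathbb{Z}^d_h$; second, Lemma \ref{conj1} and the consistency hypothesis are stated for $v\in K(\Omega)\cap C^2(\Omega)$, so neither applies to $v^\delta$ as written, and restricting attention to $\tir{B}\subset\Omega_\delta$ does not help because the operator still consumes values outside $\Omega_\delta$. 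This is precisely why the paper does not mollify but instead invokes \cite{MongeC1Alex} for smooth convex approximants defined on all of $\Omega$ and converging uniformly there --- global smooth \emph{convex} approximation of a convex function on a bounded convex domain is a nontrivial result, not obtainable by convolution alone. Your proof is repaired by replacing $v^\delta$ with those approximants (all your estimates for terms $I$, $II$, $III$ then go through verbatim, and the box device remains a genuine improvement).
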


\begin{proof}
Let $v_{\epsilon} \in K(\Omega) \cap C^2(\Omega)$ converge uniformly to $v$ on $\Omega$. The existence of $v_{\epsilon}$ may be proven as in \cite{MongeC1Alex}. Let $B$ be a Borel set with $\mathcal{M}[v](\partial B)=0$. Given $\delta >0$, we seek $h_0 >0$ such that
$| h^d \mathcal{M}_h[v_h](B) - \mathcal{M}[v](B) | < \delta $ for all $0<h<h_0$.

By Proposition \ref{weak-c}, $\exists \, \epsilon_0> 0$ such that $| \mathcal{M}[v_{\epsilon_0}](B) - \mathcal{M}[v](B) | < \delta/3$. By Lemma \ref{conj1}, $\exists h_0 >0$ such that for all $0<h<h_0$, $| h^d \mathcal{M}_h[(r_h(v_{\epsilon_0})](B) - \mathcal{M}[v_{\epsilon_0}](B) | < \delta/3$.

We may assume that $ \max_{x \in \tir{B}} |v_{\epsilon_0}(x) - v(x)| < \delta /(6 C_0)$ and 
since $v_h$ converges to $v$ on $\tir{B}$, we may assume that for $h<h_0$, $ \max_{x \in \tir{B} \cap \mathbb{Z}^d_h} |v(x) - v_h(x) | < \delta/(6 C_0)$. Thus we have
$ \max_{x \in \tir{B} \cap \mathbb{Z}^d_h}  |v_{\epsilon_0}(x) - v_h(x) | < \delta/(3 C_0)$. By Lemma \ref{conj2}, $| \mathcal{M}_h[r_h(v_{\epsilon_0})](B) - \mathcal{M}_h[v_h](B) | < \delta/3$.
This concludes the proof.

\end{proof}

We can now prove the main result of this paper

\begin{thm}
The mesh function $u_h$ defined by \eqref{m1h} converges uniformly on compact subsets to the Aleksandrov solution $u$ of \eqref{m1}.
\end{thm}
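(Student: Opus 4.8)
The plan is to run a compactness argument on the family $\{u_h\}$ and to identify every subsequential limit with the unique Aleksandrov solution, thereby forcing convergence of the whole family. By the solvability assumption each $u_h$ exists, and stability provides a bound on $u_h$ that is uniform in $h$. Fix an arbitrary sequence $h_k \to 0$. Since the $u_{h_k}$ are discrete convex mesh functions that are locally bounded, the local equicontinuity assumption applies, and a discrete Arzel\`a--Ascoli argument produces a subsequence, not relabeled, converging uniformly on compact subsets (in the sense of Definition \ref{disc-uc}) to some $u^* \in C(\Omega)$. By the assumption that uniform limits on compact subsets of mesh convex functions are convex, $u^* \in K(\Omega)$.

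Granting for the moment that $u^*$ extends to an element of $K(\Omega) \cap C(\tir{\Omega})$, the next step is to compute $\mathcal{M}[u^*]$. The weak convergence theorem proved just above then gives that $h^d \mathcal{M}_h[u_{h_k}]$ converges weakly to $\mathcal{M}[u^*]$. On the other hand, \eqref{m1h} asserts $h^d\mathcal{M}_h[u_{h_k}] = h^d f_{h_k}$ as mesh functions on $\Omega_{h_k}$, so the associated nonnegative Borel measures defined by \eqref{defn-dbm} coincide; since $f_h$ converges to $\nu$ as measures by hypothesis, $h^d f_{h_k}$ converges weakly to $\nu$. Uniqueness of weak limits then yields $\mathcal{M}[u^*] = \nu$, so that $u^*$ is an Aleksandrov solution of \eqref{m1}. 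By the uniqueness in Proposition \ref{ex-Alex}, $u^* = u$. As every sequence $h_k \to 0$ admits a subsequence along which $u_{h_k} \to u$, and the limit $u$ is independent of the chosen sequence, the whole family $u_h$ converges to $u$ uniformly on compact subsets.

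The hard part, hidden in the phrase ``granting that $u^*$ extends to $K(\Omega) \cap C(\tir{\Omega})$,'' is the boundary behavior: local uniform convergence only controls $u^*$ on compact subsets of the open set $\Omega$, whereas identifying $u^*$ with the Aleksandrov solution requires both that $u^*$ extend continuously to $\tir{\Omega}$ (so that the weak convergence theorem applies) and that $u^* = g$ on $\partial \Omega$ (so that Proposition \ref{ex-Alex} pins down the limit). Since the discrete functions satisfy $u_h = g$ only on the approximating set $\partial \Omega_h$, one must rule out a boundary layer in the limit. I would handle this with barriers adapted to the scheme: using the convex extension $\tilde{g}$ of $g$ as an upper barrier and a suitable convex lower barrier matching $g$ at $\partial \Omega$, together with the comparison principle available for a degenerate elliptic scheme in the sense of Oberman \cite{Oberman06}, one squeezes $u_h$ between sub- and super-barriers attaining $g$ on $\partial \Omega_h$. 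This forces the correct boundary values on $u^*$ and simultaneously supplies the continuity up to $\tir{\Omega}$. It is in this barrier construction, rather than in the compactness scaffolding, that the geometric hypotheses (convexity of $\Omega$, convex extendability of $g$) do their real work.
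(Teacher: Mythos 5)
Your compactness scaffolding coincides with the paper's own proof: stability gives uniform boundedness, local equicontinuity plus Arzel\`a--Ascoli gives a subsequence converging uniformly on compact subsets, the assumption on discrete convexity makes the limit convex, the weak convergence theorem for discrete Monge-Amp\`ere measures identifies $\mathcal{M}[u^*]=\nu$, and uniqueness in Proposition \ref{ex-Alex} promotes subsequential convergence to convergence of the whole family. You have also correctly located the step carrying the real difficulty: showing the subsequential limit lies in $C(\tir{\Omega})$ and equals $g$ on $\partial \Omega$. The paper disposes of that step by invoking the argument of Theorem 4.3 of \cite{DiscreteAlex}, which is based on convexity (discrete normal mappings and comparison for discrete convex mesh functions), not on any structure of the scheme.

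Your proposed repair of that step has a genuine gap, in two respects. First, it appeals to a comparison principle for $\mathcal{M}_h$, i.e.\ degenerate ellipticity in the sense of Oberman \cite{Oberman06}; but this is not among the hypotheses of the theorem --- the paper assumes only solvability, stability, consistency, and the two properties of discrete convexity, with degenerate ellipticity mentioned only as a \emph{sufficient condition for solvability}. Second, even granting comparison, the barriers you name do not work. To conclude $u_h \leq w_h$ one needs $\mathcal{M}_h[w_h] \leq f_h$ together with $w_h \geq g$ on $\partial\Omega_h$, so an upper barrier must have \emph{small} discrete Monge-Amp\`ere mass; the convex extension $\tilde g$ generally has positive mass and violates this wherever $f_h$ vanishes. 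Already in dimension one, with $\Omega=(-1,1)$, $\nu=0$, $g(\pm 1)=0$ and $\tilde g(x)=x^2-1$, the solution is $u \equiv 0$, which lies strictly above $\tilde g$ in $\Omega$; so $\tilde g$ is a lower bound there and cannot serve as an upper barrier. Genuine upper barriers are affine majorants of $g$, and tight affine majorants need not exist at every boundary point of a merely convex domain (such as the unit square used in the paper); this is precisely where convex extendability of $g$ must be exploited differently, as in \cite{Hartenstine2006} and \cite{DiscreteAlex}. Symmetrically, a lower barrier must satisfy $\mathcal{M}_h[w_h] \geq f_h$ with $w_h \leq g$ on $\partial\Omega_h$; when $\nu$ contains Dirac masses --- the very case the theorem is designed to cover --- $f_h$ is of order $h^{-d}$ at interior points, so your ``suitable convex lower barrier'' must carry unbounded discrete mass (cone-type constructions), which you do not supply. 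As it stands, the boundary step remains unproved; to complete the argument under the stated hypotheses one should follow the convexity-based route of \cite{DiscreteAlex} rather than scheme comparison.
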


\begin{proof}
By the stability assumption, the family $u_h$ is uniformly bounded and by our assumption on the discretization, locally equicontinuous. By the Arzela-Ascoli theorem, there exists a subsequence $u_{h_k}$ which converges uniformly on compact subsets to a function 
$v$. Since $u_h \in \mathcal{C}_h$ the function $v$ is convex by our assumptions on discrete convex functions. 
Since $u_h$ is uniformly bounded, $v$ is convex and bounded on $\Omega$, hence continuous on $\Omega$. 
Arguing as in the proof of \cite[Theorem 4.3]{DiscreteAlex} one proves that $v \in C(\tir{\Omega})$.
Since $h^d\mathcal{M}_h[v_h]$ converge weakly to $\mathcal{M}[v]$ and $u_h=g$ on $\partial \Omega$, the function $v$ is an Aleksandrov solution of $\eqref{m1}$. By uniqueness, $v=u$ and hence the whole family $u_h$ converges uniformly on compact subsets to $u$. 
\end{proof}


\section{Convergence of a time marching iterative method } 
Let us denote by $\mathcal{M}(\Omega^h)$ the set of mesh functions, i.e. the set of real valued functions defined on $\Omega^h$. Since $\Omega_h$ is a finite set, there is a canonical identification of $\mathcal{M}(\Omega^h)$ with $\R^N$ for some integer $N$. We will now also use the restriction operator $r_h$ for
 vector and matrix fields.  For $x \in \R^N$,  $|x|=(\sum_{i=1}^N x_i^2)^{\frac{1}{2}}$ denotes the Euclidean norm of $x$  and  $|x|_{\infty}=\max_{i=1,\ldots,N} |x_i|$  denotes its maximum norm.
 
We make the assumption that the mapping $\mathcal{M}_h$ is Lipschitz continuous with Lipschitz constant $K >0$ i.e.
\begin{equation*}
|\mathcal{M}_h[v_h] - \mathcal{M}_h[w_h] |_{\infty} \leq K |v_h-w_h|_{\infty}, v_h, w_h \in \R^N.
\end{equation*}
Here we make the abuse of notation of identifying a mesh function with its vector representation.
We also make the assumption that problem \eqref{m1h} has a unique solution $u_h$ which can be computed by a time marching method
\begin{align} \label{m1h-iter}
\begin{split}
u_h^{k+1} & = u_h^{k} + \frac{1}{\mu} \mathcal{M}_h[u_h^k] \, \text{in} \,  \Omega_h \\
u_h & = r_h(g) \, \text{on} \, \partial \Omega_h,
\end{split}
\end{align}
for $\mu \geq \mu_0$ and $u_h^0$ a suitable initial guess. Such assumptions are satisfied by proper Lipschitz continuous degenerate elliptic schemes as defined by Oberman \cite{Oberman06}.
Although our theory indicates convergence of the discretization for the case where the measure $\mu$ is a combination of Dirac masses, we were not able to get numerical evidence of convergence for the above iterative 
method for the discretization proposed in \cite{Oberman2010a} even if we use the exact solution as initial guess. Similar results for Newton's method were reported in \cite{Benamou2014b}.

Let us denote by $\Delta_h$ the standard finite difference discretization of the Laplace operator and let  $e_i$ denote the ${i}^{th}$ vector of the canonical basis of $\mathbb R^d$. For $x \in \Omega_h$ and $v_h \in \mathcal{M}(\Omega^h)$, we have
\begin{equation*}
\Delta_h v_h (x)  = \sum_{i=1}^d \frac{v_h(x+h e_i) - 2 v_h(x) + v_h(x-h e_i)}{h^2}.
\end{equation*}
When the measure $\mu$ is a combination of Dirac masses we obtained better numerical results with the preconditioned iterative method
\begin{align} \label{m1h-iter2}
\begin{split}
-\Delta_h u_h^{k+1} & = -\Delta_h u_h^{k} + \frac{1}{\mu} \mathcal{M}_h[u_h^k] \, \text{in} \,  \Omega_h \\
u_h & = r_h(g) \, \text{on} \, \partial \Omega_h,
\end{split}
\end{align}
for $\mu \geq \mu_1$ under the above assumptions. Moreover numerical experiments indicate that the method \eqref{m1h-iter2} converges faster than \eqref{m1h-iter}. The idea to use the Laplacian for faster iterative methods has a long story in various contexts \cite{Farago02} p. 58, and a remark in that direction for proper Lipschitz continuous degenerate elliptic schemes was made in \cite{GlowinskiICIAM07}. See also \cite{Oberman13}. We use the terminology preconditioned iterative method for \eqref{m1h-iter2} by analogy with preconditioned techniques for linear equations. An advantage of the preconditioned iterative method  \eqref{m1h-iter2} is that fast Poisson solvers and standard multigrid methods can be used at each step.

The proof of convergence of the iterative method \eqref{m1h-iter2} does not follow the approach in \cite{Oberman06} for proving convergence of the basic iterative method  \eqref{m1h-iter}. The proof of the latter does not seem to extend to the preconditioned version \eqref{m1h-iter2}. We take a different approach which consists in using the fact that  \eqref{m1h-iter} converges to the discrete solution of \eqref{m1h} and properties of the inverse of the operator $ \Delta_h$.

\subsection{Convergence of the preconditioned iterative method}

It can be shown \cite{Hackbusch2010} Theorem 4.4.1, that for $f \in C(\tir{\Omega})$ the problem
\begin{align*} 
\begin{split}
\Delta_h [z_h] & = r_h(f) \, \text{in} \,  \Omega_h \\
z_h & = 0 \, \text{on} \, \partial \Omega_h,
\end{split}
\end{align*}
has a unique solution. We denote by $\Delta_h^{-1}$ the inverse of the operator $\Delta_h$ with homogeneous boundary conditions. Let $||\Delta_h^{-1}||$
denote the operator norm of $\Delta_h^{-1}$, i.e.
$$
||\Delta_h^{-1}||= \sup_{|v_h|_{\infty} \neq 0} \frac{ | \Delta_h^{-1} v_h|_{\infty}} {|v_h|_{\infty}}.
$$
By Theorem 4.4.1 of \cite{Hackbusch2010}, $||\Delta_h^{-1}||$ is bounded independently of $h$. We note that Theorem 4.4.1 of \cite{Hackbusch2010} is proven for dimension $n=2$ but the proof extends immediately to arbitrary dimension.

The main result of this section is the following theorem
\begin{thm} \label{main} Let $\mathcal{M}_h$ denote a Lipschitz continuous finite difference scheme such that the mapping $T_1: \mathcal{M}(\Omega^h) \to \mathcal{M}(\Omega^h)$ defined by
$$
T_1[v_h] = v_h + \frac{1}{\mu} \mathcal{M}_h[v_h],
$$
is a strict contraction for $\mu \geq \mu_0 >0$. Then for some $\mu_1 >0$, the mapping $T_2: \mathcal{M}(\Omega^h) \to \mathcal{M}(\Omega^h)$ defined by
$$
T_2[v_h] = v_h - \frac{1}{\mu} \Delta_h^{-1} \mathcal{M}_h[v_h],
$$
is also a strict contraction for $\mu \geq \mu_1$. 
\end{thm}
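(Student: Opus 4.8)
The plan is to estimate the increment of $T_2$ in the maximum norm directly and to show that a sufficiently large $\mu$ forces its Lipschitz constant below one. Setting $d = v_h - w_h$ and $p = \mathcal{M}_h[v_h] - \mathcal{M}_h[w_h]$, we have
$$
T_2[v_h] - T_2[w_h] = d - \frac{1}{\mu} \Delta_h^{-1} p .
$$
The crude bound $|d - \tfrac{1}{\mu}\Delta_h^{-1}p|_\infty \le (1 + K\|\Delta_h^{-1}\|/\mu)\,|d|_\infty$, obtained from the Lipschitz constant $K$ of $\mathcal{M}_h$ and the boundedness of $\|\Delta_h^{-1}\|$, exceeds one, so the subtraction must genuinely be exploited. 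First I would read off the monotonicity implicit in the hypotheses. Since the schemes we consider are degenerate elliptic in the sense of \cite{Oberman06}, $\mathcal{M}_h[\cdot]$ is nonincreasing in the central value and nondecreasing in the neighbouring values; and the strict contraction of $T_1$, evaluated at an index where $|d|$ is maximal, shows that $p$ is strictly opposite in sign to $d$ there, with a quantitative margin. Writing the increment as $p = B\,d$ for a difference-quotient matrix $B$ (an averaged Jacobian, legitimate because these schemes are piecewise smooth), these two facts say exactly that $-B$ is a strictly diagonally dominant $M$-matrix, with a dominance margin $\lambda > 0$ uniform in $v_h,w_h$.

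The second ingredient is the structure of $\Delta_h^{-1}$. The discrete maximum principle underlying Theorem~4.4.1 of \cite{Hackbusch2010} is equivalent to $-\Delta_h$ being an $M$-matrix, so $\Delta_h^{-1}$ has nonpositive entries and $\|\Delta_h^{-1}\|$ is bounded independently of $h$. With $p = B\,d$ the increment of $T_2$ becomes $(I - \tfrac{1}{\mu}C)d$, where
$$
C = \Delta_h^{-1} B = (-\Delta_h)^{-1}(-B).
$$
The proof then reduces to the algebraic claim that $C$ is strictly diagonally dominant with positive diagonal, uniformly in $v_h,w_h$ and in $h$: once $c_{ii} - \sum_{j \ne i} |c_{ij}| \ge \delta > 0$, taking $\mu_1 \ge \max(\mu_0, \max_i c_{ii})$ gives $\|I - \tfrac{1}{\mu}C\| \le 1 - \delta/\mu < 1$ for every $\mu \ge \mu_1$, and since the sign pattern and the margin survive the averaging defining $B$, this upgrades to a genuine strict contraction of the nonlinear map $T_2$. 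I would also note here that $T_1$ and $T_2$ share the fixed point $u_h$, so that the contraction of $T_2$ reproves, by a route independent of \cite{Oberman06}, that \eqref{m1h-iter2} converges to the discrete solution of \eqref{m1h}.

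The hard part is the algebraic claim, and this is where the approach departs from \cite{Oberman06}: strict diagonal dominance is not preserved when an $M$-matrix is multiplied by the dense inverse of another, so the local argument that makes $T_1$ a contraction does not transfer to $T_2$. What must be exploited is that $C = (-\Delta_h)^{-1}(-B)$ applies the solution operator of $-\Delta_h$ to the elliptic operator $-B$: consistency of $\mathcal{M}_h$ with $\det D^2$ forces the row sums of $-B$ to vanish in the interior and to be controlled by the Dirichlet coupling near $\partial\Omega_h$, so that $C\mathbf{1} = (-\Delta_h)^{-1}[(-B)\mathbf{1}] \ge 0$ by the maximum principle, while the decay of the discrete Green's function of $-\Delta_h$ is what dominates the off-diagonal part of $C$ by its diagonal. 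The delicate point, which I expect to require the most care, is keeping the margin $\delta$ positive uniformly in $h$; it is precisely here that the $h$-independent bound on $\|\Delta_h^{-1}\|$ and the consistency of the scheme — not merely its Lipschitz continuity — enter, and I would carry the estimate out through a discrete barrier and comparison argument for $z = \Delta_h^{-1}p$ rather than through entrywise inequalities, since the barrier packages the diagonal-dominance bound in an $h$-robust form.
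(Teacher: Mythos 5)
Your proposal does not prove the theorem: it reduces it to the ``algebraic claim'' that $C=\Delta_h^{-1}B$ is strictly diagonally dominant with positive diagonal, uniformly in $v_h,w_h$ and in $h$, and you yourself leave that claim as ``the hard part,'' to be attacked later by barriers and Green's-function decay. That claim is nowhere established in your argument, and it is doubtful it can be from the stated hypotheses: writing $G=(-\Delta_h)^{-1}\ge 0$ and $A=-B$ (nonnegative diagonal, nonpositive off-diagonal), each off-diagonal entry $c_{ij}=G_{ij}a_{jj}+\sum_{k\ne j}G_{ik}a_{kj}$ is the sum of a nonnegative and a nonpositive term, so the entries of $C$ have no sign, and the row-sum information $C\mathbf{1}\ge 0$ that you extract from consistency does not yield diagonal dominance. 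Moreover, the theorem assumes only (i) Lipschitz continuity of $\mathcal{M}_h$ and (ii) that $T_1$ is a strict contraction for $\mu\ge\mu_0$; your averaged Jacobian $B$, the $M$-matrix property of $-B$ with a uniform margin $\lambda$, and the appeal to consistency all rest on degenerate ellipticity, properness and consistency of the scheme, which are properties of the example discretization discussed elsewhere in the paper but are \emph{not} hypotheses of this theorem.

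The step you dismissed too quickly is where the actual proof lies. You correctly observed that the crude bound $|d-\frac1\mu\Delta_h^{-1}p|_\infty\le(1+K\|\Delta_h^{-1}\|/\mu)\,|d|_\infty$ is useless, but the remedy is not monotonicity: it is to add and subtract $\frac1\mu p$ so that hypothesis (ii) can be used verbatim,
\begin{align*}
T_2[v_h]-T_2[w_h] &= \Bigl(d+\tfrac1\mu p\Bigr)-\tfrac1\mu\bigl(I+\Delta_h^{-1}\bigr)p\\
&= \bigl(T_1[v_h]-T_1[w_h]\bigr)-\tfrac1\mu\bigl(I+\Delta_h^{-1}\bigr)\bigl(\mathcal{M}_h[v_h]-\mathcal{M}_h[w_h]\bigr),
\end{align*}
whence
\begin{equation*}
|T_2[v_h]-T_2[w_h]|_\infty \le \Bigl(C_1+\tfrac K\mu\bigl(1+\|\Delta_h^{-1}\|\bigr)\Bigr)|d|_\infty .
\end{equation*}
Since $C_1<1$ and $\|\Delta_h^{-1}\|$ is bounded independently of $h$ (Theorem 4.4.1 of \cite{Hackbusch2010}), choosing $\mu_1\ge\mu_0$ with $C_1+\frac{K}{\mu_1}(1+\|\Delta_h^{-1}\|)<1$ makes $T_2$ a strict contraction for all $\mu\ge\mu_1$, with $\mu_1$ independent of $h$. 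This is exactly the paper's proof: the contraction hypothesis on $T_1$ already packages the cancellation between $d$ and $\frac1\mu p$ that you were trying to rebuild by hand from $M$-matrix structure, so no differentiability, monotonicity, or Green's-function estimates are needed. (Your closing remark that $T_1$ and $T_2$ share their fixed points is correct, since $\Delta_h^{-1}$ is injective, but it is tangential to the contraction estimate.)
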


\begin{proof}
By assumption, there exists a constant $C_1$ such that $0 <C_1<1$  and
\[
|T_1[v_h]-T_1[w_h]|_{\infty} \leq C_1|v_h-w_h|_{\infty},
\]
for all $v_h, w_h \in \mathcal{M}(\Omega^h)$. 
One may decompose $T_2[v_h]-T_2[w_h]$ as
\begin{align*}
T_2[v_h]-T_2[w_h]&=
 T_2[v_h]-T_1[v_h] +T_1[v_h]-T_1[w_h]+T_1[w_h]-T_2[w_h]
 \\
 &=(T_1[v_h]-T_1[w_h])+(T_2[v_h]-T_1[v_h] )-(T_2[w_h]-T_1[w_h]).
\end{align*}
Moreover
\[
T_1[v_h]-T_2[v_h]=\frac1\mu\left(\mathcal{M}_h[v_h]+\Delta^{-1}_d \mathcal{M}_h[v_h]\right)
=\frac1\mu\left(I+\Delta^{-1}_d \right)\mathcal{M}_h[v_h],
\]
where $I$ denotes the identity operator on $\mathcal{M}(\Omega^h)$.
We then get
\[
(T_2[v_h]-T_1[v_h] )-(T_2[w_h]-T_1[w_h])=-\frac1\mu\left(I+\Delta^{-1}_d \right)(\mathcal{M}_h[v_h]-\mathcal{M}_h[w_h]).
\]
We recall that $\mathcal{M}_h$ is Lipchitz continuous, i.e.
\[
|\mathcal{M}_h[v_h]-\mathcal{M}_h[w_h]|_{\infty}
\leq K |v_h-w_h|_{\infty},\, \forall v_h, w_h \in \mathcal{M}(\Omega^h).
\]
One deduces that
\begin{align*}
|T_2[v_h]-T_2[w_h]|_{\infty}&\leq
 |T_1[v_h]-T_1[w_h]|_{\infty} +|\frac1\mu\left(I+\Delta^{-1}_d \right)(\mathcal{M}_h[v_h]-\mathcal{M}_h[w_h])|_{\infty}
 \\
 &\leq C_1|v_h-w_h|_{\infty}+\frac K\mu  || I+\Delta^{-1}_d || \, |v_h-w_h|_{\infty}
  \\
 &\leq \left(C_1+\frac K\mu |I+\Delta^{-1}_d \|\right) |v_h-w_h|_{\infty}.
\end{align*}
Since  $\| I + \Delta^{-1}_d \| \leq \|I\| +  \| \Delta^{-1}_d  \|$ is  bounded independently of the discretization step $h$ and  $0 < C_1<1$, one may choose $\mu$ big enough such that
\[
C_1+\frac K \mu \| I+\Delta^{-1} \|<1,
\]
 making $T_2$ a strict contraction mapping. This concludes the proof. 
\end{proof}
Under the assumption of the above theorem, both the iterative methods \eqref{m1h-iter} and \eqref{m1h-iter2} converge linearly to the unique solution $u_h$ of \eqref{m1h}.

\subsection{A numerical example} \label{numerical}

We say that a mesh function $v_h$ is {\it discrete convex} if and only if $\Delta_e v_h(x) = v_h(x+e) -2 v_h(x) + v_h(x-e) \geq 0$ for all $x \in \Omega_h$ and  $e \in \mathbb{Z}^d_h$ for which $\Delta_e v_h(x)$ is defined. Then the uniform limit of
discrete convex mesh functions is convex \cite[Lemma 2.11]{DiscreteAlex}. Moreover a bounded sequence of such functions is locally equicontinuous \cite{DiscreteAlex}. 

Following \cite{Oberman2010a}, we define
\begin{equation*} 
M_h[v_h](x) = \inf_{(\alpha_1,\ldots,\alpha_n) \in W_h(x)} \prod_{i=1}^d \max \bigg( \frac{v_h(x+ \alpha_i) -2 v_h(x) + v_h (x-\alpha_i)}{|\alpha_i|^2},0 \bigg).
\end{equation*}
where for $x \in \Omega_h$, $W_h(x)$ denotes the set of orthogonal bases of $\R^d$ such that for $(\alpha_1,\ldots,\alpha_n) \in W_h(x)$, $x\pm \alpha_i \in \Omega_h$, for all $i$. 

It is known that $\mathcal{M}_h[v_h]$ satisfies the assumptions of degenerate ellipticity and Lipschitz continuity as defined by Oberman \cite{Oberman06}. The consistency of the scheme was proved in \cite{Oberman2010a} while for a proof of stability, we refer to \cite{DiscreteAlex}. Note that $\mathcal{M}_h[v_h] \geq 0$ implies that $v_h$ is discrete convex. Hence the discrete convexity assumption is enforced in the discretization. Moreover, as pointed out in \cite{DiscreteAlex}, if one considers $M_h[v_h](x) + \epsilon v_h(x)$ where $\epsilon$ is taken close to machine precision, the discretization is proper and hence uniqueness holds.

For the numerical experiments, the space dimension $d$ is taken as 2 and the computational domain is the unit square $(0,1)^2$. Numerical experiments with $\nu$ a Dirac mass was reported in earlier papers, e.g. \cite{Oberman2010a}. Here we consider the example of \cite{Benamou2014b} where $\nu$ is the sum of two Dirac masses, i.e. we take
\[ u(x,y) = \left\{ 
  \begin{array}{l l}
    |y-\frac{1}{2}| & \quad \text{if $\frac{1}{4} < x < \frac{3}{4}$}\\
    \min\bigg\{ \, \sqrt{(x-\frac{1}{4})^2 + (y-\frac{1}{2})^2}, \sqrt{(x-\frac{3}{4})^2 + (y-\frac{1}{2})^2}  \, \bigg\} & \quad \text{otherwise},
  \end{array} \right.\]
and $\nu = \pi/2 \, \delta_{(1/4,1/2)} +  \pi/2 \, \delta_{(3/4,1/2)}$.
For simplicity, we only use a 17 point stencil. The initial guess is taken as the exact solution and the nonlinear equations solved with \eqref{m1h-iter2}. Errors are given in the maximum norm and reported 
on Table \ref{table-1}. 

\begin{table}
\begin{tabular}{c|cccccc} 
 \multicolumn{7}{c}{$h$}\\
$ \mu$  &  $1/2^3$ &  $1/2^4$&  $1/2^5$ &  $1/2^6$ &  $1/2^7$ &  $1/2^8$\\ \hline
50  & 4.71 $10^{-1}$ & 2.86 $10^{-1}$ & 1.69 $10^{-1}$ & 9.77 $10^{-2}$ & 5.50 $10^{-2}$ & 3.02 $10^{-2}$ \\
\end{tabular}
\caption{ }\label{table-1}
\end{table}


\begin{thebibliography}{10}
\providecommand{\url}[1]{{#1}}
\providecommand{\urlprefix}{URL }
\expandafter\ifx\csname urlstyle\endcsname\relax
  \providecommand{\doi}[1]{DOI~\discretionary{}{}{}#1}\else
  \providecommand{\doi}{DOI~\discretionary{}{}{}\begingroup
  \urlstyle{rm}\Url}\fi

\bibitem{AwanouHybrid}
Awanou, G.: Convergence of a hybrid scheme for the elliptic {M}onge-{A}mp\`ere
  equation.
\newblock {h}ttp://homepages.math.uic.edu/~awanou/up.html

\bibitem{Awanou-Std-fd}
Awanou, G.: On standard finite difference discretizations of the elliptic
  {M}onge-{A}mp\`ere equation (2014).
\newblock {h}ttp://homepages.math.uic.edu/\~{}awanou/up.html

\bibitem{DiscreteAlex}
Awanou, G.: Discrete {A}leksandrov solutions of the {M}onge-{A}mp\`ere equation
  (2015).
\newblock {h}ttp://arxiv.org/pdf/1408.1729v2.pdf

\bibitem{MongeC1Alex}
Awanou, G.: Smooth approximations of the {A}leksandrov solution of the
  {M}onge-{A}mp\`ere equation.
\newblock Commun. Math. Sci. \textbf{13}(2), 427--441 (2015)

\bibitem{Barles94}
Barles, G.: Solutions de viscosit\'e des \'equations de {H}amilton-{J}acobi,
  \emph{Math\'ematiques \& Applications (Berlin) [Mathematics \&
  Applications]}, vol.~17.
\newblock Springer-Verlag, Paris (1994)

\bibitem{Benamou2014b}
Benamou, J.D., Froese, B.D.: A viscosity framework for computing {P}ogorelov
  solutions of the {M}onge-{A}mp\`ere equation (2014).
\newblock {h}ttp://arxiv.org/pdf/1407.1300v2.pdf

\bibitem{Evans-Gariepy}
Evans, L.C., Gariepy, R.F.: Measure theory and fine properties of functions.
\newblock Studies in Advanced Mathematics. CRC Press, Boca Raton, FL (1992)

\bibitem{Farago02}
Farag{\'o}, I., Kar{\'a}tson, J.: Numerical solution of nonlinear elliptic
  problems via preconditioning operators: theory and applications,
  \emph{Advances in Computation: Theory and Practice}, vol.~11.
\newblock Nova Science Publishers Inc., Hauppauge, NY (2002)

\bibitem{Oberman2010a}
Froese, B., Oberman, A.: Convergent finite difference solvers for viscosity
  solutions of the elliptic {M}onge-{A}mp\`ere equation in dimensions two and
  higher.
\newblock SIAM J. Numer. Anal. \textbf{49}(4), 1692--1714 (2011)

\bibitem{GlowinskiICIAM07}
Glowinski, R.: Numerical methods for fully nonlinear elliptic equations.
\newblock In: I{CIAM} 07---6th {I}nternational {C}ongress on {I}ndustrial and
  {A}pplied {M}athematics, pp. 155--192. Eur. Math. Soc., Z\"urich (2009)

\bibitem{Guti'errez2001}
Guti{\'e}rrez, C.E.: The {M}onge-{A}mp\`ere equation.
\newblock Progress in Nonlinear Differential Equations and their Applications,
  44. Birkh\"auser Boston Inc., Boston, MA (2001)

\bibitem{Hackbusch2010}
Hackbusch, W.: Elliptic differential equations, \emph{Springer Series in
  Computational Mathematics}, vol.~18, english edn.
\newblock Springer-Verlag, Berlin (2010).
\newblock Theory and numerical treatment, Translated from the 1986 corrected
  German edition by Regine Fadiman and Patrick D. F. Ion

\bibitem{Hartenstine2006}
Hartenstine, D.: The {D}irichlet problem for the {M}onge-{A}mp\`ere equation in
  convex (but not strictly convex) domains.
\newblock Electron. J. Differential Equations pp. No. 138, 9 pp. (electronic)
  (2006)

\bibitem{Mohammadi2007}
Mohammadi, B.: Optimal transport, shape optimization and global minimization.
\newblock C. R. Math. Acad. Sci. Paris \textbf{344}(9), 591--596 (2007)

\bibitem{Oberman06}
Oberman, A.M.: Convergent difference schemes for degenerate elliptic and
  parabolic equations: {H}amilton-{J}acobi equations and free boundary
  problems.
\newblock SIAM J. Numer. Anal. \textbf{44}(2), 879--895 (electronic) (2006)

\bibitem{Oberman13}
Oberman, A.M.: Finite difference methods for the infinity {L}aplace and
  {$p$}-{L}aplace equations.
\newblock J. Comput. Appl. Math. \textbf{254}, 65--80 (2013)

\bibitem{Pogorelov64}
Pogorelov, A.V.: Monge-{A}mp\`ere equations of elliptic type.
\newblock Translated from the first Russian edition by Leo F. Boron with the
  assistance of Albert L. Rabenstein and Richard C. Bollinger. P. Noordhoff,
  Ltd., Groningen (1964)

\bibitem{Pogorelov73}
Pogorelov, A.V.: Extrinsic geometry of convex surfaces.
\newblock American Mathematical Society, Providence, R.I. (1973).
\newblock Translated from the Russian by Israel Program for Scientific
  Translations, Translations of Mathematical Monographs, Vol. 35

\bibitem{Rauch77}
Rauch, J., Taylor, B.A.: The {D}irichlet problem for the multidimensional
  {M}onge-{A}mp\`ere equation.
\newblock Rocky Mountain J. Math. \textbf{7}(2), 345--364 (1977)

\end{thebibliography}


\end{document}